\newcommand{\qed}{$\;\;\;\Box$}
\newenvironment{proof}{\par\smallbreak{\sl\bf Proof.~}}
{\unskip\nobreak\hfill \qed \par\medbreak}
\newenvironment{proofthm}{\par\smallbreak{\sl\bf Proof of  Theorem~}}
{\unskip\nobreak\hfill \qed \par\medbreak}
\newcounter{claim}
\renewcommand{\theclaim}{\arabic{claim}}
{\par\medskip\par}
\newcommand{\N}{{\mathbb N}}
\newcommand{\R}{{\mathbb R}}
\newcommand{\beq}{\begin{equation}}
\newcommand{\ee}{\end{equation}}
\renewcommand{\d}{\partial}
\newtheorem{thm}{Theorem}[section]
\newtheorem{defn}[thm]{Definition}
\newtheorem{ex}[thm]{Example}
\newcommand{\io}{\iota}
\newcounter{e}
\newcommand{\reff}[1]{(\ref{#1})}
\title{Bounded Solutions to Boundary Value \\ Hyperbolic Problems} 
\newcounter{thesame}
\author{
R. Klyuchnyk
\thanks{Institute for Applied Problems of Mechanics and Mathematics,
Ukrainian National Academy of Sciences, {\small
  E-mail:
{\tt roman.klyuchnyk@gmail.com}}}
\ \ \ I. Kmit \thanks{Humboldt University of Berlin,
{\small   E-mail:
{\tt kmit@mathematik.hu-berlin.de}}. On leave from the
Institute for Applied Problems of Mechanics and Mathematics,
Ukrainian National Academy of Sciences.
}}
\date{}
\begin{document}

\maketitle

\begin{abstract}
\noindent

We investigate linear boundary value problems for first-order one-dimensional hyperbolic systems in a strip. We establish conditions for existence and uniqueness of bounded continuous solutions. For that we suppose that the non-diagonal part of the zero-order coefficients vanish at infinity. Moreover, we establish a dissipativity condition in terms of the boundary data and the diagonal part of the zero-order coefficients.

\end{abstract}

\emph{Key words:} first-order hyperbolic systems, boundary value problems, bounded solutions, dissipativity conditions, Fredholm alternative,  uniqueness of solutions

\emph{Mathematics Subject Classification: 35A17, 35F45, 35L40}

\section{Introduction}\label{sec:intr} 

\renewcommand{\theequation}{{\thesection}.\arabic{equation}}
\setcounter{equation}{0}

\subsection{Problem setting and our result}\label{sec:setting}

We investigate the general linear first-order hyperbolic system in a single space variable

 \begin{equation}
 \partial_{t}u_j
 +a_j(x,t)\partial_{x}u_j
 +\sum_{k=1}^{n}b_{jk}(x,t)u_k
 =f_j(x,t),
 \;\;\; (x,t)\in(0,1)\times\R,\;\;\; j\le n,
 \label{f1}
 \end{equation}
 subjected to the boundary conditions
 \beq\label{f2}
 \begin{array}{ll}
   u_{j}(0,t)=
 (Ru)_j(t),
 \;\;\; 1\le j\le m,\; t\in \R,  \\ [2mm]
  u_{j}(1,t)=
 (Ru)_j(t),
 \;\;\; m< j\le n,\; t\in \R,
 \end{array}
 \ee
where $0\le m\le n$ are positive integers and $R=(R_1,...,R_n)$ is a linear bounded operator from $BC_n$ to $BC_n(\R)$. Here and below by $BC_n$ we will denote the vector space of all bounded and continuous maps $u:[0,1]\times\R\to\R^{n}$, with the norm
 $$
 \|u\|_{\infty}=\max_{j\le n}\max_{x\in [0,1]}\sup_{t\in\R}|u_j|.
 $$
 Similarly, by $BC_n^1$ we will denote a Banach space of all $u\in BC_n$ such that $\d_xu,\d_tu\in BC_n$, with the norm 
 $$
\|u\|_{1}=\|u\|_{\infty}+\|\partial_x u\|_{\infty}+\|\partial_t u\|_{\infty}.
 $$
Also, we use the notation $BC_n(\R)$ for the space of all bounded and continuous maps $v:\R\to\R^n$ and the notation $BC_n^1(\R)$ for the space of all $v\in BC_n(\R)$ with $v^\prime\in BC_n(\R)$. For simplification, we will skip subscript $n$ if $n=1$.

We make the following assumptions on the coefficients of \reff{f1}:
 \beq\label{f4}
 a_j, b_{jk}\in BC^1 \mbox{ for all } j\le n \mbox{ and } k\le n,
 \ee
\beq\label{f5}
 \inf_{j,x,t}a_j>0  \mbox{ for all } j\le m \;\;\;\mbox{and}\;\;\; \sup_{j,x,t}a_j<0  \mbox{ for all } m<j\le n.
\ee
Suppose also that
\beq\label{ft7}
\begin{array}{ll}
 \mbox{the restriction of the operator } R \mbox{ to } BC_n^1
 \\  
 \mbox{is a linear bounded operator from } BC_n^1 \mbox{ to } BC_n^{1}(\R)
 \end{array}
 \ee
 and 
\beq\label{fz8}
\mbox{for all } 1\le j\neq k\le n \mbox{ there exists } \tilde{b}_{jk}\in BC^1 \mbox{ such that }
b_{jk}=\tilde{b}_{jk}(a_k-a_j).
\ee

Let us introduce the characteristics of the hyperbolic system \reff{f1}. Given $j\le n$, $x\in[0,1]$, and $t\in\R$, the $j$-th characteristic is defined as the solution $\xi\in[0,1]\mapsto\omega_j(\xi,x,t)\in\R$ of the initial value problem

\beq\label{f7}
\d_{\xi}\omega_{j}(\xi,x,t)=\frac{1}{a_j(\xi,\omega_{j}(\xi,x,t))}, \;\;\; \omega_{j}(x,x,t)=t.
\ee
To shorten notation, we will simply write $\omega_j(\xi)=\omega_j(\xi,x,t)$. Set 

\beq\label{f8}
c_j(\xi,x,t)=
\exp{{\int_{x}^{\xi}\left (\frac{b_{jj}}{a_j}\right )(\eta,\omega_j(\eta)) d\eta}}, \;\;\;
d_j(\xi,x,t)=
\frac{c_j(\xi,x,t)}{a_j(\xi,\omega_j(\xi))}.
\ee
Integration along the characteristic curves brings the system \reff{f1}--\reff{f2} to the integral form
\begin{eqnarray}
\lefteqn{
 u_j(x,t)=c_j(0,x,t)(Ru)_j(w_j(0))}\nonumber\\ [2mm] &&
 -\int_{0}^{x}d_j(\xi,x,t)\sum_{k\neq j} b_{jk}(\xi,\omega_j(\xi))u_k(\xi,\omega_j(\xi)) d\xi
 +\int_{0}^{x}d_j(\xi,x,t)f_j(\xi,\omega_j(\xi)) d\xi,\nonumber
 \\
&& \hskip10cm 1\le j\le m, \label{f10}
 \\
\lefteqn{
  u_j(x,t)=c_j(1,x,t)(Ru)_j(w_j(1))}\nonumber
 \\ [2mm] &&
 -\int_{1}^{x}d_j(\xi,x,t)\sum_{k\neq j} b_{jk}(\xi,\omega_j(\xi))u_k(\xi,\omega_j(\xi)) d\xi
 +\int_{1}^{x}d_j(\xi,x,t)f_j(\xi,\omega_j(\xi)) d\xi\nonumber
 \\
 && \hskip10cm m<j\le n.
 \label{f11}
\end{eqnarray}
By straightforward calculation, one can easily show that a $C^1$- map $u:[0,1]\times\R\to\R^{n}$ is a solution to the PDE problem \reff{f1}--\reff{f2} if and only if it satisfies the system \reff{f10}--\reff{f11}.
This motivates the following definition.

\begin{defn}
A function $u\in BC_n$ is called a bounded continuous solution to \reff{f1}--\reff{f2} if it satisfies \reff{f10} and \reff{f11}.
\end{defn}
Introduce an operator $C:BC_n\to BC_n$ by 
\beq\label{f12}
 (Cv)_j(x,t)=\left\{
 \begin{array}{rl}
c_j(0,x,t)(Rv)_j(\omega_j(0)) &\mbox{for}\ 1\le j\le m,\\
c_j(1,x,t)(Rv)_j(\omega_j(1)) &\mbox{for}\ m<j\le n.
\end{array}
\right.
 \ee

\begin{thm}\label{thm:th11}
Suppose that the conditions \reff{f4}--\reff{fz8} are fulfilled.
Moreover, assume that there exists $\ell\in\N$ such that
\beq\label{f14}
\|C^{\ell}\|_{\mathcal{L}(BC_n)}<1,
\ee
and
\beq\label{ebjk}
\begin{array}{ll}
 \mbox{for all } \varepsilon>0 \mbox{ there exists a compact interval } I\subset\R \mbox{ such that } 
 \\ 
 |b_{jk}(x,t)|<\varepsilon \mbox{ for all } 1\le j\neq k\le n, \; x\in[0,1] \mbox{ and } t\in\R\setminus I.
 \end{array}
 \ee
Then the problem \reff{f1}--\reff{f2} has a unique bounded continuous solution $u$.
\end{thm}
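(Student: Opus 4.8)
The plan is to recast the integral system \reff{f10}--\reff{f11} as an operator equation in $BC_n$ and to solve it by a Fredholm argument. With $C$ as in \reff{f12}, define bounded linear operators $B,W:BC_n\to BC_n$ by letting $(Bu)_j(x,t)$ be $-\int d_j(\xi,x,t)\sum_{k\neq j}b_{jk}(\xi,\omega_j(\xi))u_k(\xi,\omega_j(\xi))\,d\xi$ and $(Wf)_j(x,t)$ be $\int d_j(\xi,x,t)f_j(\xi,\omega_j(\xi))\,d\xi$, where in both cases the integral runs from $0$ to $x$ if $j\le m$ and from $1$ to $x$ if $j>m$; boundedness of $C,B,W$ on $BC_n$ is routine from \reff{f4}, \reff{f5} (which make $\omega_j,c_j,d_j$ bounded and $C^1$ on $[0,1]^2\times\R$) and the boundedness of $R$. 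Per the definition above, $u\in BC_n$ is a bounded continuous solution of \reff{f1}--\reff{f2} if and only if $(I-C-B)u=Wf$, so it suffices to prove that $I-C-B$ is a bijection of $BC_n$. The argument splits into: (a) $I-C$ is invertible; (b) $B$ is compact; (c) $(I-C-B)u=0\Rightarrow u=0$. For (a), writing $k=q\ell+r$ with $0\le r<\ell$ gives $\|C^k\|\le\|C^\ell\|_{\mathcal{L}(BC_n)}^{q}\|C\|^r$, so by \reff{f14} the series $\sum_{k\ge0}C^k$ converges in $\mathcal{L}(BC_n)$ and equals $(I-C)^{-1}$. Given (a) and (b), $I-C-B=(I-C)\bigl(I-(I-C)^{-1}B\bigr)$ is the composition of an invertible operator with one of the form ``identity minus compact'', hence Fredholm of index zero; together with (c) this gives bijectivity, and then $u=(I-C-B)^{-1}Wf$ is the unique bounded continuous solution.

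For (b) I would argue as follows. Fix $\eps>0$, take the compact interval $I$ from \reff{ebjk}, and split $b_{jk}=\chi_Ib_{jk}+(1-\chi_I)b_{jk}$; correspondingly $B=B_I+\tilde B_I$, where $\tilde B_I$ carries the coefficients $(1-\chi_I)b_{jk}$ of sup-norm at most $\eps$, so $\|\tilde B_I\|_{\mathcal{L}(BC_n)}\le c\,\eps$ with $c$ independent of $\eps$. Hence it is enough to show each $B_I$ is compact, and for that I use two facts. First, since $|\d_\xi\omega_j|=|a_j|^{-1}$ is bounded, there is $\kappa$ with $|\omega_j(\xi,x,t)-t|\le\kappa$ for all $\xi,x\in[0,1]$; therefore the integrand defining $(B_Iu)_j(x,t)$ vanishes unless $\mathrm{dist}(t,I)\le\kappa$, so every $B_Iu$ is supported in one fixed bounded strip $[0,1]\times J$. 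Second, $\{B_Iu:\|u\|_\infty\le1\}$ is equicontinuous on $[0,1]\times J$: equicontinuity in $x$ is immediate from the $C^1$ dependence of $\omega_j,c_j,d_j$ on $x$, while equicontinuity in $t$ is exactly what \reff{fz8} buys. Concretely, substitute the representations \reff{f10}--\reff{f11} for $u_k(\xi,\omega_j(\xi))$ into $(B_Iu)_j$, and in the resulting (iterated) integrals change the spatial variable of integration along the $j$-th characteristic to the time coordinate along the other characteristic involved; each such change has Jacobian proportional to $(a_k-a_j)^{-1}$, and this is precisely compensated by the factor $a_k-a_j$ pulled out of $b_{jk}$ through \reff{fz8} (where $a_k=a_j$, this factor and hence $b_{jk}$ vanish, so the change of variables never actually degenerates). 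The result expresses $(B_Iu)_j(x,t)$ as an integral over an $(x,t)$-dependent region with $C^1$ boundary, of a kernel that is $C^1$ in $(x,t)$, applied to the data $u,Ru,f$ in integrated form; differentiating by the Leibniz rule bounds $\|\d_t(B_Iu)\|_\infty$ by $\mathrm{const}\cdot(\|u\|_\infty+\|f\|_\infty)$, uniformly for $\|u\|_\infty\le1$. By the Arzel\`a--Ascoli theorem and the uniform support, $B_I$ sends the unit ball to a relatively compact set, so $B_I$ is compact; letting $\eps\to0$ exhibits $B$ as a norm limit of compact operators, hence compact.

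It remains to prove (c), the uniqueness, which I expect to be the main obstacle. Let $u\in BC_n$ satisfy $u=Cu+Bu$. Because $|\omega_j(\xi,x,t)-t|\le\kappa$ and $b_{jk}\to0$ at infinity by \reff{ebjk}, one gets $\sup_x|(Bu)_j(x,t)|\to0$ as $|t|\to\infty$, so $(I-C)u=Bu$ vanishes at infinity. The decisive step is to turn this into $u\equiv0$ using the dissipativity \reff{f14}: restrict the integral equation to a large compact time-window, use the strict contraction $\|C^\ell\|<1$ to bound the boundary contributions carried by $C$ over that window, and let the window exhaust $\R$. The difficulty is that $B$, although compact, is not small in operator norm and so cannot simply be absorbed into a contraction estimate; I expect to handle this by an iteration that plays the geometric decay coming from $\|C^\ell\|<1$ against the decay of the coefficients of $B$ at infinity, forcing $\|u\|_\infty=0$. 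Once (c) is in hand, the index-zero Fredholm property from step (b) promotes injectivity to bijectivity of $I-C-B$, completing the proof.
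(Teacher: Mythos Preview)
Your overall scheme---rewrite \reff{f10}--\reff{f11} as $(I-C-B)u=Wf$, invert $I-C$ via \reff{f14}, get a Fredholm property, then prove injectivity---is the same as the paper's. But step~(b) as stated is false: $B$ (the paper's $D$) is \emph{not} compact on $BC_n$, and your equicontinuity argument breaks precisely where you invoke \reff{f10}--\reff{f11}. Those representations hold only for solutions of the system; for an arbitrary $u\in BC_n$ there is no such identity to substitute, yet compactness of $B$ must be shown on all of $BC_n$. Concretely, take $n=m=2$, $a_1=1$, $a_2=2$, $b_{jj}=b_{21}=0$, $b_{12}$ smooth and compactly supported in $t$, and $u^{(r)}=(0,\phi(r(t-x)))$ for a fixed bump $\phi$ with $\phi(0)=1$, $\mathrm{supp}\,\phi\subset(-1,1)$. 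Since $t-x$ is constant along the first characteristics, $(Bu^{(r)})_1(x,t)=-\phi(r(t-x))\int_0^x b_{12}(\xi,t-x+\xi)\,d\xi$, and this family is not equicontinuous near $t=x$. The paper avoids this by proving only that $D^2$ and $DC$ are compact, then invoking Nikolsky's criterion (if $K^2$ is compact then $I-K$ is Fredholm of index zero) with $K=(I-C)^{-1}D$. The double integral in $(D^2u)_j$ is what makes \reff{fz8} effective: the inner argument $u_l(\eta,\omega_k(\eta,\xi,\omega_j(\xi)))$ has $\eta$ fixed, so differentiating in the outer variable $\xi$ produces only $\partial_2 u_l$ times a factor $a_j^{-1}-a_k^{-1}$, which combines with $b_{jk}=\tilde b_{jk}(a_k-a_j)$ to allow an integration by parts moving the derivative off $u$. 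A single application of $D$ lacks this structure: $\d_\xi[u_k(\xi,\omega_j(\xi))]$ involves $\partial_1 u_k$, which you cannot control.

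Your step~(c) is explicitly left open, and the iteration you sketch is not how the paper proceeds. The paper instead uses \reff{ebjk} to pick $T$ so large that on the half-strip $\Pi^{-T}=[0,1]\times(-\infty,-T]$ the off-diagonal $b_{jk}$ are uniformly small; then the restricted operator $\tilde D$ on $BC_n(\Pi^{-T})$ has small norm, so $(I-\tilde C)^{-1}\tilde D$ is a strict contraction and the homogeneous equation has only the zero solution on $\Pi^{-T}$. Uniqueness is then propagated to $[0,1]\times[-T,\infty)$ by solving the initial-boundary value problem \reff{f1}, \reff{f2} with data $u(\cdot,-T)$, whose classical well-posedness is known. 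This gives $\dim\mathcal{K}=0$, and the index-zero Fredholm property finishes the argument.
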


\begin{ex}\rm
 The following example shows that if the conditions \reff{fz8} and \reff{ebjk} are not satisfied, then the statement of Theorem \ref{thm:th11} is not true, in general. Specifically, we consider the problem
\begin{equation}
 \begin{array}{ll}
 \displaystyle\partial_{t}u_1
 +\frac{2}{\pi}\partial_{x}u_1
 -u_2=0 &  \\ [2mm]
\displaystyle\partial_{t}u_2
 +\frac{2}{\pi}\partial_{x}u_2
 +u_1=0,  &
 \end{array}
 \label{f1ex}
 \end{equation} 
 \beq\label{f2ex}
 \begin{array}{ll}
  u_{1}(0,t)=0,  \;\;\;\; 
  u_{2}(1,t)=0. \\  &
  \end{array}
  \ee
Obviously, \reff{f1ex}--\reff{f2ex} is a particular case of \reff{f1}--\reff{f2} and satisfies all assumptions of Theorem \ref{thm:th11} with the exception of \reff{fz8} and \reff{ebjk}. It is straightforward to check that
$$
u_1=\sin{\frac{\pi}{2}x}\sin{l(t-\frac{\pi}{2}x)}, \;\;\; u_2=\cos{\frac{\pi}{2}x}\sin{l(t-\frac{\pi}{2}x)}, \; l\in\mathbb{N}
$$
are infinitely many linearly independent bounded continuous 
($2\pi$-periodic in $t$) solutions to the problem \reff{f1ex}--\reff{f2ex}.
This means that the kernel of the operator of \reff{f1ex}--\reff{f2ex} is infinite dimensional. 
Thus, the uniqueness conclusion of Theorem \ref{thm:th11} is not true.
\end{ex}

In Section \ref{sec:motiv} we give a brief motivation of our investigations. 
In Section \ref{sec:Fredh} we prove the Fredholm alternative for  \reff{f1}--\reff{f2} 
(Theorem \ref{thm:th12}), while in Section \ref{sec:uni} -- a uniqueness result 
for \reff{f1}--\reff{f2} (Theorem \ref{thm:thuni}). Theorem \ref{thm:th11} will 
then straightforwardly follow from Theorems \ref{thm:th12} and \ref{thm:thuni}.

\subsection{Motivation and state of the art}\label{sec:motiv}

Systems of the type \reff{f1}--\reff{f2}
are used to model problems
of laser dynamics \cite{LiRadRe,RadWu,Sieber},  chemical kinetics \cite{zel}, chemotaxis \cite{Seg}
and population dynamics  \cite{HRL}. Another area of applications of such models is boundary control problems \cite{coron,lakra}. In many mathematical models the system \reff{f1} is controlled by the so-called reflection boundary 
conditions what is a particular case of \reff{f2}.

In \cite{Coppel} Coppel proved
the Dichotomy theorem for the linear ODE, namely $x^\prime=A(t)x$. 
It claims that the inhomogeneous equation $x^\prime=A(t)x+f(t)$ has a unique bounded continuous solution on $\R$ for every bounded and continuous function $f$ if and only if the homogeneous equation $x^\prime=A(t)x$ has an exponential dichotomy on $\R$. In \cite{Latt} the authors provide a criterion of the existence of exponential dichotomy on $\R$ for a strongly continuous exponentially bounded evolution family on a Banach space in terms of existence and uniqueness of a bounded continuous mild solutions. In this respect, our result is an important step towards the existence of the exponential dichotomy for boundary value hyperbolic problems.

The correct posedness 
of a particular case 
of \reff{f1}--\reff{f2} was investigated in \cite{Kirilich}. Specifically, 
the authors  studied the system \reff{f1} with the boundary conditions
\beq\label{f1100}
\begin{array}{ll}
u_j(0,t)=\mu_j(t), \; j\le m,  \\ [2mm]
u_j(1,t)=\mu_j(t), \; m<j\le n.
\end{array}
\ee
and investigate existence and uniqueness of continuous but {\it not necessarily bounded solutions}
solutions.
The main assumption imposed in \cite{Kirilich} 
is a smallness of all $b_{jk}$ in a neighborhood of $-\infty$. It comes from the 
Banach fixed point argument used in the proof of the main result. 
In comparison, in the present paper, we 
allow for $b_{jj}$ to be elements of $BC$ only, and for $b_{jk}$ with $j\neq k$ we impose the assumption \reff{ebjk}. 
Moreover, after the changing of variables $u_j\to v_j=u_j-\mu_j(t)$ in \reff{f1} and \reff{f1100} we get $C=0$.
This means that the dissipativity conditions \reff{f14} is satisfied here automatically (with $\ell=1$).

In \cite{KmKl,KR2,KR3}  time-periodic solutions to the system \reff{f1}
with reflection boundary conditions are investigated.
It is suggested a rather general approach  to proving the Fredholm alternative
in spaces of time-periodic functions 
(in the autonomous case  \cite{KR2})
and in the space of continuous and time-periodic functions 
(in the non-autonomous case  \cite{KR3}). In the present paper, we extend this approach from the spaces of periodic functions to
the spaces of bounded functions and prove the Fredholm alternative
for quite general boundary conditions. Note that, when the problem \reff{f1}--\reff{f2}
is considered in the space of continuous and periodic in time functions, then
according to the standard terminology \reff{f14} 
means non-resonant behavior of  \reff{f1}--\reff{f2}.

\section{Fredholm alternative}\label{sec:Fredh}

\renewcommand{\theequation}{{\thesection}.\arabic{equation}}
\setcounter{equation}{0}

We use the notation 
$$
 x_j=\left\{
 \begin{array}{rl}
 0 &\mbox{if}\ 1\le j\le m,\\
 1 &\mbox{if}\ m< j\le n.
\end{array}
\right.
$$
Define linear bounded operators $D,F: BC_n\to BC_n$ by
 \beq \label{f34}
 (Du)_j(x,t)=
 -\int_{x_j}^{x}d_j(\xi,x,t)\sum_{k\neq j} b_{jk}(\xi,\omega_j(\xi))u_k(\xi,\omega_j(\xi)) d\xi, \;\;\; j\le n
 \ee
 and
 \beq \label{f35}
 (Ff)_j(x,t)=
 \int_{x_j}^{x}d_j(\xi,x,t)f_j(\xi,\omega_j(\xi)) d\xi, \;\;\; j\le n.
 \ee
 On the account of \reff{f12}, \reff{f34}, and \reff{f35}, the system \reff{f10}--\reff{f11} can be written as the operator equation
 \beq \label{f31}
 u=Cu+Du+Ff.
 \ee

\begin{thm}\label{thm:th12}
Suppose that all conditions of Theorem \ref{thm:th11} are fulfilled. Let $\mathcal{K}$ denote the vector space of all bounded continuous solutions to 
\reff{f1}--\reff{f2} with $f\equiv0$. Then

$(i)$ $\dim \mathcal{K}<\infty$ and the vector space of all $f\in BC_{n}$ such that there exists a bounded continuous solution to \reff{f1}--\reff{f2} 
is a closed subspace of codimension $\dim \mathcal{K}$ in $BC_{n}$.

$(ii)$ If $\dim \mathcal{K}=0$, then for any $f\in BC_{n}$ there exists a unique bounded continuous solution $u$ to \reff{f1}--\reff{f2}.
\end{thm}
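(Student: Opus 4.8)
\noindent\emph{Proof plan.} The starting point is the operator equation \reff{f31}, $u=Cu+Du+Ff$. Since $\|C^\ell\|_{\mathcal{L}(BC_n)}<1$ by \reff{f14}, the spectral radius of $C$ is strictly less than $1$, so $I-C$ is boundedly invertible on $BC_n$, with $(I-C)^{-1}=\sum_{p\ge0}C^p$. Hence \reff{f31} is equivalent to $(I-B)u=(I-C)^{-1}Ff$ with $B:=(I-C)^{-1}D$; its homogeneous version $(I-B)u=0$ has solution set exactly $\mathcal{K}$ (these are the bounded continuous solutions of \reff{f1}--\reff{f2} with $f\equiv0$), and a datum $f$ admits a bounded continuous solution precisely when $Ff$ lies in the range of $I-C-D=(I-C)(I-B)$. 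So everything reduces to showing that $I-C-D$ (equivalently $I-B$) is a Fredholm operator of index zero: part (ii) is then the special case of trivial kernel, in which such an operator is invertible and $u=(I-B)^{-1}(I-C)^{-1}Ff$ is the unique solution.

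The Fredholm property is the crux, and the step I expect to be the main obstacle. The plan is to bring a suitable iterate of the equation — obtained by substituting the integral representation \reff{f10}--\reff{f11} into itself one or more times — into the form (invertible operator) $+$ (compact operator), and then to invoke the Riesz--Schauder theory; all of \reff{f4}, \reff{f5}, \reff{ft7}, \reff{fz8} and \reff{ebjk} enter in verifying the compactness. The mechanism: by \reff{ebjk} the off-diagonal coefficients $b_{jk}$ can be truncated to a compact time-interval at the cost of an error of arbitrarily small operator norm, so it is enough to handle the truncated operators; by \reff{f5} the characteristics have globally bounded slope, so these truncated operators act on, and produce functions supported in, a fixed compact region of the strip, on which their images of the unit ball are uniformly bounded. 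The delicate point is equicontinuity there: a single integration along a characteristic does not regularise, because the integrand contains $v$ evaluated at a point whose time-coordinate moves with $(x,t)$; however, writing $b_{jk}=\tilde b_{jk}(a_k-a_j)$ as in \reff{fz8}, one changes variables along characteristics so that the factor $a_k-a_j$ exactly cancels the otherwise singular Jacobian arising in the passage between the $j$-th and $k$-th characteristic parametrisations, and the iterated integral operator thereby becomes a genuine two-dimensional integral operator with a continuous, compactly supported kernel, which may be differentiated in $x$ and $t$ under the integral sign. Together with \reff{ft7}, which allows the boundary operator $R$ to be carried through these computations within the needed regularity, this produces an equicontinuous family, and the Arzel\`{a}--Ascoli theorem yields compactness. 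This is, in essence, the analogue for bounded (rather than time-periodic) functions of the Fredholm arguments of the works cited in Section~\ref{sec:motiv}, with \reff{ebjk} providing the extra ingredient that compactifies the time direction.

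Granting the decomposition (invertible) $+$ (compact), Riesz--Schauder gives at once that $I-C-D$ has finite-dimensional kernel $\mathcal{K}$ and closed range of finite codimension; and the homotopy $t\mapsto I-C-tD$, which stays within Fredholm operators (by the same argument, since the scaled coefficients still satisfy \reff{fz8} and \reff{ebjk}), keeps the index constant and equal to the index of the invertible operator $I-C$, namely $0$. Hence the codimension of the range of $I-C-D$ equals $\dim\mathcal{K}$. Transporting this through the isomorphism $I-C$, the set of $f$ for which \reff{f1}--\reff{f2} has a bounded continuous solution is the preimage under $F$ of the range of $I-C-D$, hence closed (preimage of a closed subspace under the bounded operator $F$); its codimension equals $\dim\mathcal{K}$ provided the sum of the range of $F$ and the range of $I-C-D$ is all of $BC_n$, i.e.\ provided no nonzero continuous functional annihilating the range of $I-C-D$ also annihilates the range of $F$. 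This last transversality fact is verified by identifying such functionals with the bounded solutions of the formal adjoint problem of \reff{f1}--\reff{f2} through a Green-type identity and using the explicit characteristic representation of $F$; it, together with the compactness step, carries the real content, while the rest is bookkeeping. Part (ii) then follows as noted above.
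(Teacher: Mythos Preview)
Your plan is essentially the paper's: reduce to showing $I-(I-C)^{-1}D$ is Fredholm of index zero by proving that a suitable iterate of $(I-C)^{-1}D$ is compact, using \reff{fz8} for the smoothing step (integration by parts in the characteristic parameter after writing $b_{jk}=\tilde b_{jk}(a_k-a_j)$) and \reff{ebjk} for the behaviour as $|t|\to\infty$. Two organisational differences are worth flagging. First, the paper avoids your homotopy argument by invoking Nikolsky's criterion: if $K^2$ is compact then $I-K$ is automatically Fredholm \emph{of index zero}, so once $[(I-C)^{-1}D]^2$ is compact the index comes for free. Second, the paper does not truncate $b_{jk}$; it expands $D(I-C)^{-1}D=D^2+DC(I-C)^{-1}D$ and proves $D^2$ and $DC$ compact via a generalised Arzel\`a--Ascoli theorem on $[0,1]\times\R$ (equicontinuity on each $Q(T)$ from the $C^1$ bound produced by \reff{fz8}, uniform smallness outside $Q(T)$ from \reff{ebjk}). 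Your truncation is an equivalent repackaging, but be aware that you cannot treat $[(I-C)^{-1}D_T]^2$ as a single two-dimensional integral operator with compactly supported kernel: $(I-C)^{-1}$ is not local in time, so you will still need the splitting $D(I-C)^{-1}D=D^2+DC(I-C)^{-1}D$ (or something equivalent) to isolate and control the boundary piece $C$ in the middle --- and this, not the iteration itself, is where \reff{ft7} is actually used.

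Your last paragraph overcomplicates part~(i). The transversality $\mathrm{Range}(F)+\mathrm{Range}(I-C-D)=BC_n$ follows in one line from the identity $D=F\tilde B$, where $(\tilde Bu)_j:=-\sum_{k\ne j}b_{jk}u_k$ (compare \reff{f34} with \reff{f35}): then $(I-C-D)u+F(\tilde Bu)=(I-C)u$, and $I-C$ is onto. Hence $F$ induces an isomorphism $BC_n/S\to BC_n/\mathrm{Range}(I-C-D)$ and $\mathrm{codim}\,S=\dim\mathcal K$. No adjoint problem or Green-type identity is needed --- and on $BC_n$, whose dual is unpleasant, identifying annihilating functionals with bounded solutions of a formal adjoint hyperbolic system would itself require nontrivial justification. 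The paper does not spell this step out at all.
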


One of the technical tools we employ for the proof is a generalized Arzela-Ascoli compactness criteria for unbounded domains, see \cite{Lev}. To formulate it, we need a corresponding notion of equicontinuity.
\begin{defn}
A family $\Phi\subset BC_{n}$ is called equicontinuous on $[0,1]\times\R$ if
\begin{itemize}
\item $\Phi$ is equicontinuous on any compact set in $[0,1]\times\R$, and 
\item for any $\varepsilon>0$ there exists $T>0$ such that \begin{equation}\label{qwq}
|u(x^{'},t^{'})-u(x^{''},t^{''})|<\varepsilon
\end{equation}
for all $x^{'}, x^{''}\in [0,1]$, all $t^{'}, t^{''}\in\R\setminus[-T,T]$, and all $u\in\Phi$.
\end{itemize}
\end{defn}

\begin{thm}\label{thm:th21} (\rm a generalized Arzela-Ascoli theorem)
A family $\Phi\subset BC_{n}$ is precompact in $BC_{n}$ if and only if $\Phi$ is bounded in $BC_{n}$ and equicontinuous on $[0,1]\times\R$.
\end{thm}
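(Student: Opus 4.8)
The plan is to prove the two implications separately, working in the complete metric space $BC_n$, where precompactness is equivalent to total boundedness and to sequential precompactness (every sequence admitting a subsequence that is Cauchy, hence convergent, in the sup norm). The forward implication (precompactness $\Rightarrow$ boundedness and equicontinuity) is the routine half: precompact sets are bounded, and total boundedness reduces both parts of the equicontinuity requirement, via a standard three-$\varepsilon$ estimate, to finitely many elements of $\Phi$. The substance of the theorem, and the direction actually needed in the sequel, is the converse, on which I concentrate.

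So assume $\Phi$ is bounded and equicontinuous on $[0,1]\times\R$, and let $(u^{(m)})_{m\in\N}$ be an arbitrary sequence in $\Phi$; I aim to extract a subsequence converging in $\|\cdot\|_\infty$. First I exhaust the strip by the compact rectangles $K_N=[0,1]\times[-N,N]$, $N\in\N$. On each $K_N$ the sequence is bounded (since $\Phi$ is bounded) and equicontinuous (by the first bullet of the definition of equicontinuity), so the classical Arzela--Ascoli theorem yields a subsequence converging uniformly on $K_N$. Applying this successively for $N=1,2,\dots$ and passing to the diagonal subsequence, which I again denote $(u^{(m)})$, I obtain a subsequence converging uniformly on every $K_N$ to a continuous limit $u$.

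It remains to upgrade this local uniform convergence to uniform convergence on the whole strip, and this gluing is the main obstacle. Here the second bullet of the equicontinuity hypothesis is decisive. Fix $\varepsilon>0$ and choose $T>0$ as in \reff{qwq}, so that $|v(x',t')-v(x'',t'')|<\varepsilon$ for every $v\in\Phi$, all $x',x''\in[0,1]$ and all $t',t''\in\R\setminus[-T,T]$. Fixing the reference level $t_0=2T$ and comparing, for $t>T$, the value $u^{(m)}(x,t)$ with $u^{(m)}(x,t_0)$ (both arguments lying outside $[-T,T]$), the three-term split
$$|u^{(m)}(x,t)-u^{(m')}(x,t)|\le|u^{(m)}(x,t)-u^{(m)}(x,t_0)|+|u^{(m)}(x,t_0)-u^{(m')}(x,t_0)|+|u^{(m')}(x,t_0)-u^{(m')}(x,t)|$$
bounds the first and third terms by $\varepsilon$ each via \reff{qwq}, while the middle term is controlled by the already established uniform convergence on the slice $\{t_0\}\times[0,1]\subset K_{2T}$. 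The symmetric estimate handles $t<-T$, and on the compact set $K_T$ the uniform convergence is immediate. Consequently $\|u^{(m)}-u^{(m')}\|_\infty<3\varepsilon$ for all large $m,m'$, so the diagonal subsequence is uniformly Cauchy on $[0,1]\times\R$ and therefore converges in $BC_n$. Thus every sequence in $\Phi$ has a convergent subsequence, $\Phi$ is sequentially precompact, and hence precompact, which completes the plan.
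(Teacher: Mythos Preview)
The paper does not supply its own proof of this theorem; it is quoted as a known result from Levitan \cite{Lev}, so there is nothing to compare against on that front. Only the ``if'' direction is actually used later (to establish that $D^2X$ and $DCX$ are precompact), and your argument for that direction is correct: the diagonal extraction together with the three-term estimate anchored at $t_0=\pm 2T$ does exactly what is needed to pass from local uniform convergence to uniform convergence on the whole strip.

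There is, however, a genuine gap in your treatment of the forward implication. You call it ``routine'' and reduce it, via total boundedness and a three-$\varepsilon$ split, to verifying the two equicontinuity conditions for finitely many centers $u_1,\dots,u_k\in BC_n$. For the first bullet (equicontinuity on compacta) this works, because each $u_i$ is uniformly continuous on a compact rectangle. For the second bullet it does not: a single element of $BC_n$ need not satisfy \reff{qwq} for any $T$. Take $n=1$ and $u(x,t)=\sin t$; the singleton $\Phi=\{u\}$ is trivially precompact in $BC_1$, yet for every $T>0$ one can pick $t'=T+\pi/2$ and $t''=T+3\pi/2$, both in $\R\setminus[-T,T]$, with $|u(x,t')-u(x,t'')|=2$. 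So the second bullet fails even for this one-point family, and the ``only if'' assertion, as literally stated, is false. Your three-$\varepsilon$ reduction cannot close this step because the required property of the individual $u_i$ simply does not hold.

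In short: your proof of the sufficiency half is sound and is precisely what the paper needs; the necessity half is not routine and, with the definition of equicontinuity given here, is not true. If you want a genuine ``if and only if'' you must replace the second bullet by a weaker tail condition (for instance, an equi-small-oscillation condition phrased in terms of the Stone--\v{C}ech boundary, or equivalently a condition that for every $\varepsilon$ there is a compact $K$ and $\delta>0$ such that $|u(p)-u(q)|<\varepsilon$ for all $u\in\Phi$ whenever $p,q\notin K$ and $|p-q|<\delta$), but that is a different theorem.
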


\begin{proofthm}{\bf\ref{thm:th12}.}
First observe that the operator $I-C:BC_{n}\to BC_{n}$ is bijective, what straightforwardly follows from the Banach fixed-point theorem and the condition \reff{f14}. Then the operator $I-C-D$ is Fredholm of index zero if and only if
\begin{equation}\label{fricd}
I-(I-C)^{-1}D:BC_n\to BC_n \;\textrm{is Fredholm of index zero.}
\end{equation}
Nikolsky's criterion \cite[Theorem XIII.5.2]{KA} says that an operator $I+K$ on a Banach space is Fredholm of index zero whenever $K^2$ is compact.
Hence, we are done with \reff{fricd} if we show that the operator$[(I-C)^{-1}D]^2=(I-C)^{-1}D(I-C)^{-1}D$ is compact. As the composition of a compact and a bounded operator is a compact operator, it is enough to show that  
$$
D(I-C)^{-1}D:BC_n\to BC_n \;\textrm{is compact.}
$$
Since $D(I-C)^{-1}D=D^2+DC(I-C)^{-1}D$ and $(I-C)^{-1}D$ is bounded, it is sufficient to prove that
\beq \label{f38}
D^2, DC:BC_n\to BC_n\;\textrm{are compact.}
\ee
To show \reff{f38}, we use Theorem \ref{thm:th21}. 
Given $T>0$, set $Q(T)=\{(x,t)\in[0,1]\times\R\,:\,-T\le t\le T\}$. Fix an arbitrary bounded set $X\subset BC_n$. For \reff{f38} it is sufficient to prove the following two statements:
\begin{equation}\label{plusone}
D^2X \;\textrm{and}\; DCX\;\textrm{are equicontinuous on }\; Q(T)\;\textrm{ for an arbitrary fixed }\;T>0
\end{equation}
and
\begin{equation}\label{plustwo}
\begin{array}{cc}
\textrm{given}\; \varepsilon>0, \;\textrm{there exists}\; T>0 \;\textrm{such that \reff{qwq}}\; \textrm{is fulfilled for all}\; &  \\ 
x^{'}, x^{''}\in[0,1],\;\;\; t^{'}, t^{''}\in\R\setminus[-T,T], \;\;\; u\in D^2X \;\textrm{and}\; u\in DCX. & 
\end{array}
\end{equation}
We start with the proving \reff{plusone}. Denote by $C_n(Q(T))$ (respectively $C_n^1(Q(T))$) the Banach space of continuous (respectively continuously differentiable) vector functions on $Q(T)$.
As $C^{1}_{n}(Q(T))$ is compactly embedded into $C_{n}(Q(T))$ (due to the Arzela-Ascoli theorem), it is sufficient to show that
\beq \label{f39}
\|D^2u|_{Q(T)}\|_{C^{1}_{n}(Q(T))}+\|DCu|_{Q(T)}\|_{C^{1}_{n}(Q(T))}=O(\|u\|_{\infty}) \;\textrm{for all }\; u\in X.
\ee
It should be noted that  for all  sufficiently large~$T$ the functions 
$D^2u$ and $DCu$ restricted to $Q(T)$ depend  only on $u$ restricted to $Q(2T)$.

We will use the following formulas
\beq \label{f22}
 \partial_x\omega_{j}(\xi)=
 -\frac{1}{a_j(x,t)}\exp{{\int_{\xi}^{x}\left (\frac{\partial_2a_j}{a_j^2}\right )(\eta,\omega_j(\eta)) d\eta}},
 \ee
 \beq \label{f23}
 \partial_t\omega_{j}(\xi)=
 \exp{{\int_{\xi}^{x}\left (\frac{\partial_2a_j}{a_j^2}\right )(\eta,\omega_j(\eta)) d\eta}},
 \ee
 being true for all $j\le n$, all $\xi,x\in[0,1]$, and all $t\in\R$. Here and below by $\d_i$ we denote the partial derivative with respect to the $i$-th argument. Then for all sufficiently large $T>0$ the partial derivatives $\partial_xD^2u$, $\partial_{t}D^2u$, $\partial_{x}DCu$, and $\partial_{t}DCu$  on $Q(T)$ 
exist and are continuous for all $u\in C^{1}(Q(2T))$. Since $C^{1}(Q(2T))$ is dense in $C(Q(2T))$, 
then the desired property \reff{f39} will follow from the  bound
\beq \label{f310}
\left\|D^2u|_{Q(T)}\right\|_{C^1_{n}(Q(T))}
+\|DCu|_{Q(T)}\|_{C^1_{n}(Q(T))}
= O(\|u\|_{C_{n}(Q(2T))}) \;\textrm{for all}\; u\in C^1_{n}(Q(2T)).
\ee
This bound is proved similarly to \cite[Lemma 4.2]{KR3}:

We start with the estimate
$$
\left\|D^2u|_{Q(T)}\right\|_{C^1_{n}(Q(T))}=O(\|u\|_{C_{n}(Q(2T))}) \;\textrm{for all}\; u\in C^1_{n}(Q(2T)).
$$
Given $j\le n$ and $u\in C^1_n(Q(2T))$, let us consider the following representation for $(D^2u)_j(x,t)$ obtained after the application of the Fubini's theorem:
\beq \label{f311}
(D^2u)_j(x,t)
=\sum_{k\neq j}\sum_{l\neq k}\int_{x_j}^x\int_{\eta}^{x} d_{jkl}(\xi,\eta,x,t)b_{jk}(\xi,\omega_j(\xi))u_l(\eta,\omega_k(\eta,\xi,\omega_j(\xi))) d\xi d\eta,
\ee
where 
\beq \label{f311d}
d_{jkl}(\xi,\eta,x,t)=d_j(\xi,x,t)d_{k}(\eta,\xi,\omega_{j}(\xi))b_{kl}(\eta,\omega_{k}(\eta,\xi,\omega_j(\xi))).
\ee
It is easy to see, that from \reff{f311} follows that 
$$
\left\|D^2u|_{Q(T)}\right\|_{C_n(Q(T))}= O(\|u\|_{C_{n}(Q(2T))}).
$$
Since 
$$
(\d_t+a_j(x,t)\d_x)\varphi(\omega_j(\xi,x,t))=0
$$
for all $j\le n, \varphi\in C^1(\R), x,\xi\in[0,1]$, and $t\in\R$, one can easily check that
$$
\|[(\d_t+a_j(x,t)\d_x)(D^2u)_j|_{Q(T)}]\|_{C_{n}(Q(T))} = O\left(\|u\|_{C_n(Q(2T))}\right)
\mbox{ for all } j\le n \mbox{ and } u\in C^1_{n}(Q(2T)).
$$
Hence the estimate 
$\left\|\d_xD^2u|_{Q(T)}\right\|_{C_n(Q(T))}= O(\|u\|_{C_{n}(Q(2T))})$ will follow from the following one:
\beq \label{f31jhr1}
\|\d_tD^2u|_{Q(T)}\|_{C_{n}(Q(T))}= O(\|u\|_{C_{n}(Q(2T))}).
\ee
We are therefore reduced to prove \reff{f31jhr1}. To this end, we start with the following consequence of \reff{f311}:
\begin{eqnarray*}
\lefteqn{
\d_t[(D^2u)_j(x,t)]}
\nonumber\\ &&
=\displaystyle\sum_{k\neq j}\sum_{l\neq k}\int_{x_j}^x\int_{\eta}^{x} \frac{d}{dt}\Bigl[ d_{jkl}(\xi,\eta,x,t)b_{jk}(\xi,\omega_j(\xi))\Bigr] u_l(\eta,\omega_k(\eta,\xi,\omega_j(\xi))) d\xi d\eta
\nonumber\\ &&
+\displaystyle\sum_{k\neq j}\sum_{l\neq k}\int_{x_j}^x\int_{\eta}^{x} d_{jkl}(\xi,\eta,x,t) b_{jk}(\xi,\omega_j(\xi))
\nonumber\\ &&
\times
\d_t\omega_k(\eta,\xi,\omega_j(\xi))\d_t\omega_j(\xi)\d_2u_l(\eta,\omega_k(\eta,\xi,\omega_j(\xi))) d\xi d\eta. \label{f312}
\end{eqnarray*}
Let us transform the second summand. Using \reff{f7}, \reff{f22}, and \reff{f23}, we get
\begin{eqnarray}
\lefteqn{
\frac{d}{d\xi} u_l(\eta,\omega_k(\eta,\xi,\omega_j(\xi)))} \nonumber \\ &&
=\Bigl[\d_x\omega_k(\eta,\xi,\omega_j(\xi))+\d_t\omega_k(\eta,\xi,\omega_j(\xi))\d_{\xi}\omega_j(\xi)\Bigr] \d_2u_l(\eta,\omega_k(\eta,\xi,\omega_j(\xi))) \label{eqwn}
\\ &&
=\left ( \frac{1}{a_j(\xi,\omega_j(\xi))}-\frac{1}{a_k(\xi,\omega_j(\xi))}\right ) \d_t\omega_k(\eta,\xi,\omega_j(\xi))\d_2u_l(\eta,\omega_k(\eta,\xi,\omega_j(\xi))). \nonumber
\end{eqnarray}
Therefore, 
\begin{eqnarray}
\lefteqn{ 
 b_{jk}(\xi,\omega_j(\xi))\d_t\omega_k(\eta,\xi,\omega_j(\xi))\d_2u_l(\eta,\omega_k(\eta,\xi,\omega_j(\xi)))}
\nonumber \\ &&
 =\displaystyle a_j(\xi,\omega_j(\xi))a_k(\xi,\omega_j(\xi))\tilde{b}_{jk}(\xi,\omega_j(\xi))\frac{d}{d\xi} u_l(\eta,\omega_k(\eta,\xi,\omega_j(\xi))), \label{f313}
\end{eqnarray}
where the functions $\tilde{b}_{jk}\in BC$ are fixed to satisfy \reff{fz8}. Note that $\tilde{b}_{jk}$ are not uniquely defined by \reff{fz8} for $(x,t)$ with $a_{j}(x,t)=a_{k}(x,t)$. Nevertheless, as it follows from \reff{eqwn}, the right-hand side (and, hence, the left-hand side of \reff{f313}) do not depend on the choice of $\tilde{b}_{jk}$, since $\frac{d}{d\xi}u_{l}(\eta,\omega_{k}(\eta,\xi,\omega_{j}(\xi)))=0$ if $a_{j}(x,t)=a_{k}(x,t)$. 

Write
$$
\tilde{d}_{jkl}(\xi,\eta,x,t)
=d_{jkl}(\xi,\eta,x,t)\d_t\omega_j(\xi)a_k(\xi,\omega_j(\xi))a_j(\xi,\omega_j(\xi))\tilde{b}_{jk}(\xi,\omega_j(\xi)),
$$
where $d_{jkl}$ are introduced by \reff{f311d} and \reff{f8}. Using  \reff{f7} and \reff{f22}, 
we see that the function $\tilde{d}_{jkl}(\xi,\eta,x,t)$ is $C^1$-smooth
 in $\xi$ due to the regularity assumptions \reff{f4} and \reff{fz8}. Similarly,
using \reff{f23}, we see that the functions $d_{jkl}(\xi,\eta,x,t)$ and $b_{jk}(\xi,\omega_j(\xi))$ are $C^1$-smooth in $t$.

By \reff{f313} we have
\begin{eqnarray}\label{parDt}
\lefteqn{
\d_t[(D^2u)_j(x,t)]} \nonumber
\\ &&
= \displaystyle \sum_{k\neq j}\sum_{l\neq k}\int_{x_j}^x\int_{\eta}^{x} \frac{d}{dt} [d_{jkl}(\xi,\eta,x,t)b_{jk}(\xi,\omega_j(\xi))] u_l(\eta,\omega_k(\eta,\xi,\omega_j(\xi))) d\xi d\eta \nonumber
\\ &&
+\displaystyle \sum_{k\neq j}\sum_{l\neq k}\int_{x_j}^x\int_{\eta}^{x}\tilde{d}_{jkl}(\xi,\eta,x,t)\frac{d}{d\xi} u_l(\eta,\omega_k(\eta,\xi,\omega_j(\xi))) d\xi d\eta
\nonumber
\\ &&
=\displaystyle \sum_{k\neq j}\sum_{l\neq k}\int_{x_j}^x\int_{\eta}^{x} \frac{d}{dt} [d_{jkl}(\xi,\eta,x,t)b_{jk}(\xi,\omega_j(\xi))] u_l(\eta,\omega_k(\eta,\xi,\omega_j(\xi))) d\xi d\eta
\\ &&
-\displaystyle \sum_{k\neq j}\sum_{l\neq k}\int_{x_j}^x\int_{\eta}^{x}\d_{\xi}\tilde{d}_{jkl}(\xi,\eta,x,t)u_l(\eta,\omega_k(\eta,\xi,\omega_j(\xi))) d\xi d\eta
\nonumber
\\ &&
+\displaystyle \sum_{k\neq j}\sum_{l\neq k}\int_{x_j}^x\left [\tilde{d}_{jkl}(\xi,\eta,x,t) u_l(\eta,\omega_k(\eta,\xi,\omega_j(\xi)))\right ]_{\xi=\eta}^{\xi=x} d\eta. 
\nonumber
\end{eqnarray}
The desired estimate \reff{f31jhr1} now easily follows from the assumptions \reff{f4}, \reff{f5}, \reff{fz8} and the equations \reff{f311} and \reff{parDt}.

To finish with \reff{f39}, it remains to show that 
\beq \label{f314}
\|DCu|_{Q(T)}\|_{C_{n}(Q(T))}+\|\d_tDCu|_{Q(T)}\|_{C_{n}(Q(T))}= O(\|u\|_{C_{n}(Q(2T))}) \;\textrm{for all} \; u\in C^1_n(Q(2T)), 
\ee
as the estimate for $\d_xDCu$ follows similarly to the case of $\d_xD^2u$. In order to prove \reff{f314}, we consider an arbitrary integral contributing into $DCu$, namely
\beq \label{f315}
\int_{x}^{x_j} e_{jk}(\xi,x,t)b_{jk}(\xi,\omega_j(\xi))(Ru)_k(\omega_k(x_k,\xi,\omega_j(\xi))) d\xi,
\ee
where 
$$
e_{jk}(\xi,x,t)=d_j(\xi,x,t)c_k(x_k,\xi,\omega_j(\xi))
$$
and $j\le n$ and $k\le n$ are arbitrary fixed. From \reff{f315} follows that
$$
\|DCu|_{Q(T)}\|_{C_{n}(Q(T))}=O(\|u\|_{C_{n}(Q(2T))}).
$$
Differentiating \reff{f315} in $t$, we get
\begin{eqnarray}
\lefteqn{
\displaystyle \int_{x}^{x_j} \frac{d}{dt}\Bigl[e_{jk}(\xi,x,t)b_{jk}(\xi,\omega_j(\xi))\Bigr](Ru)_{k}(\omega_k(x_k,\xi,\omega_j(\xi))) d\xi}
\label{dtDC}
\\ &&
+\displaystyle \int_{x}^{x_j} e_{jk}(\xi,x,t)b_{jk}(\xi,\omega_j(\xi))
\d_t\omega_k(x_k,\xi,\omega_j(\xi))\d_t\omega_j(\xi)\d_{2}(Ru)_k(\omega_k(x_k,\xi,\omega_j(\xi))) d\xi.\nonumber
\end{eqnarray}
Our task is to estimate the second integral; for the first one the desired estimate is obvious. Similarly to the above, we use \reff{f7}, \reff{f22}, and \reff{f23} to obtain 
\begin{eqnarray*}
\lefteqn{
\frac{d}{d\xi}(Ru)_k(\omega_k(x_k,\xi,\omega_j(\xi)))}
\\ &&
=\Bigl[\d_x\omega_k(x_k,\xi,\omega_j(\xi))+\d_t\omega_k(x_k,\xi,\omega_j(\xi))\d_{\xi}\omega_j(\xi)\Bigr]\d_{2}(Ru)_{k}(\omega_k(x_k,\xi,\omega_j(\xi)))
\\ &&
=\left ( \frac{1}{a_j(\xi,\omega_j(\xi))}-\frac{1}{a_k(\xi,\omega_j(\xi))}\right ) \d_t\omega_k(x_k,\xi,\omega_j(\xi))\d_{2}(Ru)_{k}(\omega_k(x_k,\xi,\omega_j(\xi))).
\end{eqnarray*}
Taking into account \reff{fz8}, the last expression reads
\begin{eqnarray}
\lefteqn{ 
 b_{jk}(\xi,\omega_j(\xi))\d_t\omega_k(x_k,\xi,\omega_j(\xi))\d_{2}(Ru)_{k}(\omega_k(x_k,\xi,\omega_j(\xi)))}
\nonumber \\ &&
 =\displaystyle a_j(\xi,\omega_j(\xi))a_k(\xi,\omega_j(\xi))\tilde{b}_{jk}(\xi,\omega_j(\xi))\frac{d}{d\xi}(Ru)_{k}(\omega_k(x_k,\xi,\omega_j(\xi))).\label{f313009}
\end{eqnarray}
Set
$$
\tilde{e}_{jk}(\xi,x,t)
=e_{jk}(\xi,x,t)\d_t\omega_j(\xi)a_k(\xi,\omega_j(\xi))a_j(\xi,\omega_j(\xi))\tilde{b}_{jk}(\xi,\omega_j(\xi)).
$$
Using \reff{f22} and \reff{f313009}, let us transform the second summand in \reff{dtDC} as
\begin{eqnarray}
%\lefteqn{
&\displaystyle \int_{x}^{x_j} e_{jk}(\xi,x,t)b_{jk}(\xi,\omega_j(\xi))
\d_t\omega_k(x_k,\xi,\omega_j(\xi))\d_t\omega_j(\xi)\d_{2}(Ru)_{k}(\omega_k(x_k,\xi,\omega_j(\xi))) 
d\xi&
%}
\nonumber\\ &\displaystyle
=\int_{x}^{x_j}\tilde{e}_{jk}(\xi,x,t)\frac{d}{d\xi} (Ru)_{k}(\omega_k(x_k,\xi,\omega_j(\xi))) d\xi&
\nonumber\\ &\displaystyle
=\Bigl[\tilde{e}_{jk}(\xi,x,t) (Ru)_k(\omega_k(x_k,\xi,\omega_j(\xi)))\Bigr]_{\xi=x}^{\xi=x_j}&
\nonumber\\ &\displaystyle
-\int_{x}^{x_j}\d_{\xi}\tilde{e}_{jk}(\xi,x,t)(Ru)_k(\omega_k(x_k,\xi,\omega_j(\xi))) d\xi.&
\label{fc22}
\end{eqnarray}
The bound \reff{f314} now easily follows from \reff{dtDC} and \reff{fc22}. This finishes the proof of the bound \reff{f310} and, hence the statement \reff{plusone}.

It remains to prove \reff{plustwo}. Fix an arbitrary $\varepsilon>0$. We have to prove the estimates
\begin{equation}\label{d2e}
|(D^2u)(x^{'},t^{'})-(D^{2}u)(x^{''},t^{''})|<\varepsilon
\end{equation}
and 
\begin{equation}\label{dce}
|(DCu)(x^{'},t^{'})-(DCu)(x^{''},t^{''})|<\varepsilon
\end{equation}
for all $u\in X$ and all $x^{'}, x^{''}\in [0,1]$,  $t^{'}, t^{''}\in\R\setminus[-T,T]$ and some $T>0$.

Let us prove \reff{d2e}. By \reff{f311}, given $j\le n$ and $u\in X$, we have
$$
\begin{array}{cc}
|(D^2u)_j(x^{'},t^{'})-(D^{2}u)_j(x^{''},t^{''})|\le|(D^2u)_j(x^{'},t^{'})|+|(D^{2}u)_j(x^{''},t^{''})| &  \\
=2\displaystyle\max\limits_{j\le n}\max\limits_{x\in[0,1]}\max\limits_{t\in\R\setminus[-T,T]}\left|\sum_{k\neq j}\sum_{l\neq k}\int_{x_j}^{x}\int_{\eta}^{x} d_{jkl}(\xi,\eta,x,t)b_{jk}(\xi,\omega_j(\xi))u_l(\eta,\omega_k(\eta,\xi,\omega_j(\xi))) d\xi d\eta\right| &  \\
\le M\|u\|_{\infty}\displaystyle\max\limits_{k\neq j, l\neq k}\max\limits_{x,\xi,\eta\in[0,1]}\max\limits_{t\in\R\setminus[-T,T]}|b_{jk}(\xi,\omega_j(\xi))b_{kl}(\eta,\omega_k(\eta,\xi,\omega_j(\xi)))|, & 
\end{array}
$$
the constant $M$ being dependent on $n$, $a_j$ and $b_{jj}$ but not on $u\in X$ and $b_{jk}$ with $j\neq k$. Since $\|u\|_{\infty}$ is bounded on $X$, the desired estimate \reff{d2e} now straightforwardly follows from the assumption \reff{ebjk} and the fact that $\omega_j(\xi,x,t)\to\infty$ as $t\to\pm\infty$.

The estimate \reff{dce} can be obtained by the same argument, what finishes the proof of \reff{plustwo}. The theorem is proved.

\end{proofthm}

\section{Uniqueness of a bounded continuous solution}\label{sec:uni}

\begin{thm}\label{thm:thuni}
Suppose that the conditions \reff{f4}, \reff{f5}, and \reff{f14} are fulfilled. Then there is $\varepsilon>0$ and $T>0$ such a bounded continuous solution to \reff{f1}--\reff{f2} (if any) is unique whenever
\beq\label{ebjk2}
 |b_{jk}(x,t)|<\varepsilon \mbox{ for all } 1\le j\neq k\le n, \; x\in[0,1], \mbox{ and } t\in(-\infty,T].
 \ee
\end{thm}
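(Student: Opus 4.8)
The plan is to prove that the homogeneous problem, i.e.\ \reff{f1}--\reff{f2} with $f\equiv 0$, has only the trivial bounded continuous solution; by linearity this is equivalent to the asserted uniqueness. In the operator form \reff{f31} the homogeneous problem reads $u=Cu+Du$. By \reff{f14} and the Banach fixed-point theorem, $I-C$ is boundedly invertible on $BC_n$ with $\normed{(I-C)^{-1}}\le\gamma:=\big(\sum_{i=0}^{\ell-1}\normed{C}^i\big)\big(1-\normed{C^\ell}\big)^{-1}$, so the homogeneous problem is equivalent to $u=(I-C)^{-1}Du$. Two structural observations drive the argument. First, by \reff{f5} one has $t-c_0\le\omega_j(\xi,x,t)\le t$ whenever $\xi$ lies between $x_j$ and $x$, with $c_0=(\min_j\inf_{x,t}|a_j|)^{-1}$; hence, by \reff{f34}, the restriction of $Du$ to $\{t\le\tau\}$ depends only on the restrictions of $u$ and of the off-diagonal $b_{jk}$ to $\{t\le\tau\}$. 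Secondly, the boundary term $C$ transports the trace data along the characteristics to the earlier time $\omega_j(x_j,x,t)\le t$, so that $C$, and therefore $(I-C)^{-1}=\sum_{r\ge0}C^r$, are causal in the time variable as well; in particular $p_\tau\big((I-C)^{-1}v\big)\le\gamma\,p_\tau(v)$ for all $\tau$, where $p_\tau(v):=\sup_{x\in[0,1]}\sup_{s\le\tau}|v(x,s)|$.

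I would first prove that $u$ vanishes on the half-strip $\{t\le T\}$. Fix $\tau\le T$. From \reff{f34} and the first observation, $(Du)_j(x,s)$ with $s\le\tau$ involves $b_{jk}$ ($j\ne k$) only at times $\le\tau\le T$, where $|b_{jk}|<\varepsilon$, and involves $u$ only at times $\le\tau$; hence $p_\tau(Du)\le K\varepsilon\,p_\tau(u)$, where $K:=(n-1)\max_j\sup|d_j|$ depends only on $n$, the $a_j$ and the $b_{jj}$. Combining this with the identity $u=(I-C)^{-1}Du$ and the causality bound for $(I-C)^{-1}$ yields $p_\tau(u)\le\gamma K\varepsilon\,p_\tau(u)$. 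It therefore suffices to fix $\varepsilon>0$ with $\gamma K\varepsilon<1$ (this depends only on $n$, the $a_j$, the $b_{jj}$, $R$ and $\ell$, not on the off-diagonal $b_{jk}$): then $p_\tau(u)=0$ for every $\tau\le T$, that is, $u\equiv 0$ on $[0,1]\times(-\infty,T]$.

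The second half propagates this vanishing to the right. Iterating \reff{f31} with $f\equiv 0$ gives $u=C^\ell u+\big(\sum_{i=0}^{\ell-1}C^i\big)Du$. Assume $u\equiv 0$ on $[0,1]\times(-\infty,\tau_0]$ for some $\tau_0\ge T$ and let $\delta>0$. For $t\in(\tau_0,\tau_0+\delta]$ the integrand defining $(Du)_j(x,t)$ is supported on the set of $\xi$ with $\omega_j(\xi,x,t)>\tau_0$, whose length is $O(t-\tau_0)$ by \reff{f5}; hence $\sup_{(\tau_0,\tau_0+\delta]}|Du|\le C_1 B\,\delta\,\sup_{(\tau_0,\tau_0+\delta]}|u|$, where $B:=\max_{j\ne k}\normed{b_{jk}}_\infty$ and $C_1$ depends only on $n$, the $a_j$ and the $b_{jj}$. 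Evaluating the iterated identity on the window $(\tau_0,\tau_0+\delta]$ and using $u\equiv 0$ for $t\le\tau_0$ together with the causality of $C$ and of $\sum_i C^i$, one obtains
\[
\sup_{(\tau_0,\tau_0+\delta]}|u|\ \le\ \Big(\normed{C^\ell}+C_1 B\,\delta\sum_{i=0}^{\ell-1}\normed{C}^i\Big)\,\sup_{(\tau_0,\tau_0+\delta]}|u|.
\]
Since $\normed{C^\ell}<1$, I fix $\delta>0$ (independently of $\tau_0$) so small that the bracket is $<1$; then $\sup_{(\tau_0,\tau_0+\delta]}|u|=0$, i.e.\ $u\equiv 0$ on $[0,1]\times(-\infty,\tau_0+\delta]$. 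Starting from $\tau_0=T$ and iterating, $u\equiv 0$ on $[0,1]\times(-\infty,T+N\delta]$ for every $N\in\N$, whence $u\equiv 0$. This gives Theorem~\ref{thm:thuni} with the above $\varepsilon$ and any $T>0$.

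The step I expect to be the main obstacle is the treatment of the boundary operator $C$: its norm is not small and it couples the solution across the whole strip, so the naive fixed-point estimate based only on $\normed{D}$ fails. The remedy is to localise the dissipativity hypothesis $\normed{C^\ell}<1$ to half-strips $\{t\le\tau\}$ and to short windows $\{\tau_0<t\le\tau_0+\delta\}$; making this rigorous rests on the causality of $C$ in $t$ (which for $D$ is built into \reff{f34} and \reff{f5}, and for $C$ reflects the causal structure of the boundary operator $R$ appearing in the applications), and on the quantitative use of \reff{f5} to make the contribution of $D$ over a window of width $\delta$ proportional to $\delta$. Note also that the smallness of the $b_{jk}$ is used only in the first half, to rule out non-uniqueness ``coming in from $t=-\infty$'', consistently with the Example, where these coefficients are nowhere small and the homogeneous kernel is infinite dimensional.
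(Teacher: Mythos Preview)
Your argument follows the same two--step strategy as the paper: first show that any bounded continuous solution of the homogeneous problem vanishes on a left half-strip, using the smallness of the off-diagonal coefficients there together with the invertibility of $I-C$; then propagate this vanishing forward in time. For the first step your causal seminorms $p_\tau$ encode precisely what the paper does by restricting the operators to $BC_n(\Pi^{-T})$ and invoking $\|(I-\tilde C)^{-1}\tilde D\|<1$; the two formulations are equivalent. For the second step the paper simply notes that the trace $u|_{t=-T}$ serves as Cauchy data and cites an existing well-posedness result for the initial--boundary value problem (reference~\cite{Km}) to conclude uniqueness on $\Pi_{-T}$. Your window argument $\tau_0\mapsto\tau_0+\delta$, based on the iterated identity $u=C^\ell u+\sum_{i<\ell}C^iDu$ and the $O(\delta)$-smallness of $D$ over a window of width $\delta$, is a correct and more self-contained substitute for that citation; it also makes transparent that the forward step needs no smallness of the $b_{jk}$ whatsoever, only \reff{f14}.

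You correctly isolate the one structural hypothesis on which both arguments silently rest, namely that $(Rv)(s)$ depends only on $v|_{\{t\le s\}}$. The paper uses this implicitly when it asserts that $\tilde C$ acts on $BC_n(\Pi^{-T})$ and that \reff{f14} carries over to $\tilde C$, and again when it invokes the forward initial--boundary theory; so your proof is no less rigorous on this point, and has the merit of making the hidden assumption explicit.
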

\begin{proof}
Given $T\in\R$, let $\Pi^{T}=[0,1]\times(-\infty,T]$ and $\Pi_{T}=[0,1]\times[T,\infty)$. Given $T>0$, consider the problem \reff{f1}--\reff{f2} in $\Pi^{-T}$. 
The system of integral equations can again be written in the operator form $u=\tilde{C}u+\tilde{D}u+\tilde{F}f$ with operators $\tilde{C},\tilde{D},\tilde{F}:BC_{n}(\Pi^{-T})\to BC_{n}(\Pi^{-T})$ given by the rules \reff{f12}, \reff{f34} and \reff{f35}, respectively. As the operator $I-\tilde{C}: BC_{n}(\Pi^{-T})\to BC_{n}(\Pi^{-T})$ is bijective (by the condition \reff{f14}), the operator equation reads
\begin{equation}\label{fre}
u=(I-\tilde{C})^{-1}\tilde{D}u+(I-\tilde{C})^{-1}\tilde{F}f.
\end{equation}
Because of assumption \reff{ebjk2}, the value of $T>0$ can be chosen so large that the norm of the operator $\tilde{D}$ is sufficiently small. Consequently, for such $T$ we have
$$
\|(I-\tilde{C})^{-1}\tilde{D}\|_{\mathcal{L}(BC_n(\Pi^{-T}))}<1.
$$
By the Banach fixed-point theorem, there exists a unique function $u\in BC_n(\Pi^{-T})$ satisfying \reff{f10}--\reff{f11} in $\Pi^{-T}$.

Now consider the problem \reff{f1}--\reff{f2} in $\Pi_{-T}$ with the initial condition
\begin{equation}\label{trysta}
u_{j}|_{t=-T}=u_j(x,-T), \; j\le n.
\end{equation}
Existence and uniqueness of a continuous solution $u\in C_n(\bar{\Pi}_{-T})$ to the 
initial-boundary value problem \reff{f1}, \reff{f2}, \reff{trysta} follows from \cite{Km}. 
Summarizing, the problem \reff{f1}, \reff{f2} in the strip $[0,1]\times\R$  has a unique 
continuous solution  bounded at $-\infty$.
This immediately entails that a bounded continuous solution to the problem \reff{f1}--\reff{f2} 
(if any) is unique. The proof is therewith complete.
\end{proof}

 To finish with Theorem \ref{thm:th11}, it remains to note that, by   Theorem \ref{thm:thuni},
$\dim \mathcal{K}=0$. Then Theorem \ref{thm:th11} immediately follows from  Theorem \ref{thm:th12} $(\io\io)$.


\begin{thebibliography}{10}

\bibitem{coron} J.-M. Coron, G. Bastin, B. d'Andr\'ea-Novel, Dissipative boundary conditions 
for one dimensional 
nonlinear hyperbolic systems, {\it SIAM J. Control and Optimization}, {\bf 47}, 1460--1498 (2008).

\bibitem{Coppel}
W. A. Coppel, {\it Dichotomies in Stability Theory}, Lecture Notes in Mathematics,
Springer (1978).

\bibitem{HRL}
T. Hillen, C. Rohde, F. Lutscher, Existence of
weak solutions for a hyperbolic
model of chemosensitive movement, {\it J. Math. Anal. Appl.}, {\bf 260}, 173--199 (2001).

\bibitem{KA} L. V. Kantorovich, G. P. Akilov, {\it Functional Analysis},
Second Edition, Pergamon Press (1982).

\bibitem{Kirilich} V.M. Kirilich, A.D. Myshkis, Boundary-value problem without initial conditions for a linear one-dimensional system of hyperbolic equations, 
{\it Differ. Equations}, {\bf 28}, 393--399 (1992).

\bibitem{Km} I. Kmit, Classical solvability of nonlinear initial-boundary problems for first-order hyperbolic systems, {\it J. Dynamical Systems and Differential Equations}, {\bf 1}, No. 3, 191--195 (2008).

\bibitem{KmKl}I. Kmit, R. Klyuchnyk, Fredholm solvability of time-periodic boundary value hyperbolic problems,
{\it J. Math. Anal. Appl.}, {\bf 442}, No. 2, 804–819 (2016).

\bibitem{KR2} I. Kmit, L. Recke, Fredholmness and smooth dependence for linear hyperbolic 
periodic-Dirichlet problems,
 {\it J. Differ. Equations} {\bf 252}, 1962--1986 (2012).

\bibitem{KR3}
I.Kmit, L.Recke, Fredholm alternative and solution regularity for time-periodic 
hyperbolic systems,
Accepted in {\it Differential and Integral Equations} (2016).
 % E-print: {\it http://arxiv.org/abs/1108.2882}.

\bibitem{Latt} Y. Latushkin,  T. Randolph, R. Schnaubelt, Exponential dichotomy and mild solutions of nonautonomous equations in Banach spaces, 
{\it J. Dyn. Diff. Equat.}, {\bf 10}, 489--510 (1998).

\bibitem{Lev} B.M. Levitan, Die Verallgemeinerung der Operation der Verschiebung
im Zusammenhang mit fastperiodischen Funktionen,
{\it Matem. Sbornik},  {\bf 7}, No. 49,  449--478 (1940).

\bibitem{LiRadRe}  M. Lichtner, M. Radziunas, L. Recke,
Well-posedness, smooth dependence and center manifold reduction for a semilinear hyperbolic 
system from laser dynamics, {\it Math. Methods Appl. Sci.} {\bf 30}, 931--960 (2007).

\bibitem{lakra} 
L. Pavel,
Classical solutions in Sobolev spaces for a class of hyperbolic Lotka--Volterra systems, 
{\it SIAM J. Control Optim.}, {\bf 51}, 2132--2151 (2013). 

\bibitem{RadWu} M. Radziunas, H.-J. W\"unsche, Dynamics of 
multisection DFB semiconductor 
lasers: traveling wave and mode approximation models, {\it Optoelectronic 
Devices--Advanced Simulation and Analysis}, Springer,
 121--150 (2005).

\bibitem{Seg} Segel, Lee A., A theoretical study of receptor mechanisms in bacterial chemotaxis, {\it SIAM J. Appl. Math.}, {\bf 32}, 653--665 (1977).

\bibitem{Sieber} J. Sieber, Numerical bifurcation analysis for multisection semiconductor lasers, 
{\it SIAM J. Appl. Dyn. Syst.}, {\bf  1}, 248--270 (2002).
 
\bibitem{zel}T. I. Zelenyak,
On stationary solutions of mixed problems relating to the study of certain chemical processes, 
{\it Differ. Equations}, {\bf  2}, 98--102 (1966).

\end{thebibliography}
\end{document}